\pgfplotsset{compat=1.3}
\newtheorem{thm}{Theorem}
\newtheorem{ex}[thm]{Example}
\newtheorem{remark}[thm]{Remark}
\newtheorem{cor}[thm]{Corollary}
\begin{document}
\title[Spectral Bounds for Directed Graphs via Asymmetric Matrices]{Spectral Bounds for Directed Graphs via Asymmetric Matrices: Applications to Toughness}
\author[Rebecca Carter]{Rebecca Carter}
\address{Department of Mathematics and Statistics, Queen's University, Kingston, Ontario, K7L 3N6, Canada.}
\email{   rebecca.carter@queensu.ca }

\begin{abstract}  
We establish an Expander Mixing Lemma for directed graphs in terms of the eigenvalues of an associated asymmetric transition probability matrix, extending the classical spectral inequality to the asymmetric setting. As an application, we derive a spectral bound on the toughness of directed graphs that generalizes Alon's bound for $k$-regular graphs, showing how structural properties of directed graphs can be captured through their asymmetric spectra.
\end{abstract}

\maketitle

\section{Introduction}

Spectral graph theory has traditionally focused on undirected graphs, for which the corresponding matrices are symmetric. By contrast, the spectral study of directed graphs, and of non-reversible Markov chains, remains less developed. This is largely due to the lack of symmetry of their associated matrices and the consequent absence of an orthogonal eigenbasis. Nevertheless, directionality is a fundamental feature of many real-world networks, making the extension of spectral methods to asymmetric matrices both natural and necessary. Most existing approaches analyze the spectrum of Hermitian surrogates associated to a directed graph, such as directed Laplacians \cite{Chung2005} or by using singular values \cite{Butler, Chatterjee2025}, which algebraically obscure the influence of asymmetry itself. In contrast, this paper relates the properties of a directed graph to the spectrum of an asymmetric matrix which describes it.

While there has been significant progress in developing spectral tools for directed graphs through Hermitian surrogates such as the directed Laplacian, comparatively little is known about how the spectrum of an asymmetric matrix reflects structural properties such as toughness. The toughness of a connected undirected graph $\Gamma$ with vertex set $V(\Gamma)$ was introduced by V. Chv\'{a}tal in 1973 \cite{Chvatal1973}. It is defined as 
$$t(\Gamma) = \min_{S \subseteq V(\Gamma)}\left\{\frac{\lvert S \rvert}{c(\Gamma - S)}\right\},$$
where $S \subseteq V(\Gamma)$ are the subsets whose removal disconnects the graph $\Gamma$, and $c(\Gamma - S)$ denotes the number of connected components of $\Gamma - S$ (and hence is at least two for such subsets $S$).

The spectral study of toughness was initiated by N. Alon \cite{Alon1995}, who proved that for an undirected $k$-regular graph,
$$t(\Gamma) > \frac{1}{3}  \Bigg( \frac{k^2}{k \lambda + \lambda^2} - 1 \Bigg),$$
where $\lambda$ is the second largest adjacency eigenvalue in absolute value (the largest being $k$). Independently, A.E. Brouwer \cite{Brouwer} independently showed that for an undirected $k$-regular graph, $t(\Gamma)> \frac{k}{\lambda} - 2$, and conjectured that $t(\Gamma) \geq \frac{k}{\lambda}-1$, a result later proved by X. Gu \cite{Gu2021}. 

Both Alon and Gu's arguments relied on the celebrated Expander Mixing Lemma (EML), often attributed to Alon and Chung \cite{AlonChung1988}. Numerous variations of this classical lemma have been developed, and the recent framework by A. Abiad and M. Zeijlemaker \cite{AbiadZeijlemaker2024} unifies several of them. The framework also refines a result of Butler \cite{Butler} that establishes an EML for directed graphs in terms of singular values.

Recently, Ferland \cite{Ferland2025} initiated a study of directed toughness, but not from a spectral standpoint. The motivation of this paper is to extend the spectral study of toughness to directed graphs. Specifically, in Theorem~\ref{EML_thm}, we generalize the classical version of the Expander Mixing Lemma to directed graphs by considering the spectrum of their asymmetric transition probability matrices. Applying this result, we extend Alon's result by deriving a lower bound on the toughness of a directed graph in terms of its asymmetric spectrum.

\section{Preliminaries}

Suppose $\Gamma$ is a directed graph with vertex set $V(\Gamma)$ and edge set $E(\Gamma)$. A walk in $\Gamma$ is an alternating sequence of vertices and edges
$$ v_0, e_1, v_1, \ldots, e_n, v_n,$$
where for each $1 \leq i \leq n$, the tail and head of the edge $e_i$ are $v_{i-1}$ and $v_i$, respectively. A random walk on the directed graph $\Gamma$ is described by a row stochastic transition matrix $P$, which can also be interpreted as a weighted adjacency matrix. Although such a matrix is not necessarily unique, Example~\ref{randomwalk_ex} presents a common construction.

\begin{ex} \label{randomwalk_ex}
    Suppose $\Gamma$ is a directed graph with vertex set $V(\Gamma)$ and edge set $E(\Gamma)$. The matrix $P$ is a transition probability matrix for $\Gamma$ with entries  
    \begin{equation*} 
    p_{ij} = 
    \begin{cases}
        \frac{1}{d^-(v_i)}, & \text{if } (v_i, v_j) \in E(\Gamma), \\[6pt]
        0, & \text{otherwise},
    \end{cases}
    \end{equation*}
    where $d^-(v_i)$ is the outdegree of the vertex $v_i \in V(\Gamma)$. 
\end{ex}

Unless otherwise stated, we will take $P$ to be of this form and refer to it as the \textit{transition probability matrix} of $\Gamma$. 

The Perron--Frobenius theorem states that if $A$ is a nonnegative, irreducible matrix, then $A$ has a real positive eigenvalue $\lambda_1$ such that the absolute values of all other eigenvalues are less than or equal to $\lambda_1$. We will refer to this eigenvalue as the \textit{dominant eigenvalue}. An eigenvector corresponding to $\lambda_1$ can be chosen with all positive entries; it is often called the \emph{Perron vector}. 

For the transition probability matrix $P$ of a strongly connected directed graph $\Gamma$, the dominant eigenvalue is $\lambda_1 = 1$. The corresponding Perron vector, which is a left eigenvector, can be rescaled so that its components sum to 1, yielding an initial probability distribution for the random walk on $\Gamma$. In the aperiodic case, all other eigenvalues have absolute value strictly less than 1, and the Perron vector is the unique \textit{stationary distribution} $\boldsymbol{\pi}^T$ for $P$. 

The existence of $\boldsymbol{\pi}^T$ follows from the Fundamental Theorem of Markov Chains, which also states that
$$\pi_j = \lim_{t \to \infty} p_{ij}^{(t)},$$
where $p_{ij}^{(t)}$ denotes the $(i,j)$-entry of $P^t$ and $\pi_j$ is the $j$th component of $\boldsymbol{\pi}^T$. We will denote the maximial and minimal components of $\boldsymbol{\pi}^T$ by $\pi_{max}$ and $\pi_{min}$. For any subset $U \subseteq V(\Gamma)$, we write $\pi(U) := \sum_{v_i \in U} \pi_i$.

An undirected graph can be viewed as a special case of a directed graph in which each undirected edge $(v_i, v_j)$ is replaced by two directed edges $(v_i, v_j)$ and $(v_j, v_i)$.

\section{Expander Mixing Lemma}

Let $P$ be a transition probability matrix for a directed graph $\Gamma$.
We define $\rho$ to be the maximum absolute value of the eigenvalues of $P$ which are strictly less than the dominant eigenvalue. In the case that $P$ is irreducible and aperiodic, $\rho \geq \lvert \lambda_i \rvert$ for all $2 \leq i \leq n$. 

We will use $\lVert \cdot \rVert$ to denote the operator norm induced by the Euclidean norm. That is, for a matrix $A$, 
$$\lVert A \rVert = \sup_{\lVert x \rVert = 1} \lVert Ax \rVert_2.$$

\begin{thm} \label{EML_thm}
    Let $\Gamma$ be a directed graph with $n$ vertices and $P$ be its transition probability matrix with $(i,j)$-entries $p_{ij}$. Suppose $P$ is irreducible, aperiodic, and has $n$ linearly independent eigenvectors. Then, for any $U, W \subseteq V(\Gamma)$, 
    $$ \left\lvert \sum_{\substack{i \in U \\ j \in W}} p_{ij} - \lvert U \rvert \pi(U) \right\rvert \leq \rho \sqrt{\bigg(\lVert C \rVert^2 \lvert U \rvert - \frac{\lvert U \rvert ^2}{n}\bigg)\bigg(\lVert C^{-1} \rVert^2 \lvert W \rvert - \pi(W)^2 n \bigg)}, $$
    where $C$ is a matrix of normalized eigenvectors which diagonalizes $P$.
\end{thm}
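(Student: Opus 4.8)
The plan is to realize the double sum as a bilinear form in the indicator vectors and then exploit the diagonalizability of $P$ through a biorthogonal eigendecomposition. Writing $\mathbf{1}_U$ and $\mathbf{1}_W$ for the $0/1$ column indicator vectors of $U$ and $W$, I would first observe that
$$\sum_{\substack{i \in U \\ j \in W}} p_{ij} = \mathbf{1}_U^{T} P\, \mathbf{1}_W.$$
Since $P$ has $n$ linearly independent eigenvectors, I can write $P = C D C^{-1}$ with $D = \operatorname{diag}(\lambda_1, \ldots, \lambda_n)$ and $\lambda_1 = 1$. Denoting by $c_1, \ldots, c_n$ the unit-norm columns of $C$ and by $r_1^{T}, \ldots, r_n^{T}$ the rows of $C^{-1}$, biorthogonality $r_i^{T} c_j = \delta_{ij}$ gives the spectral expansion $P = \sum_{i=1}^{n} \lambda_i\, c_i r_i^{T}$. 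The Perron data pins down the leading pair: because $P$ is row-stochastic, $c_1$ is the normalized all-ones vector $\mathbf{1}/\sqrt{n}$, and the matching left eigenvector is $r_1^{T} = \sqrt{n}\,\boldsymbol{\pi}^{T}$, the normalization being forced by $r_1^{T} c_1 = 1$ together with $\boldsymbol{\pi}^{T}\mathbf{1} = 1$.

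Next I would isolate the $i = 1$ term. Setting $x_i = c_i^{T}\mathbf{1}_U$ and $y_i = r_i^{T}\mathbf{1}_W$, the expansion yields
$$\mathbf{1}_U^{T} P\, \mathbf{1}_W = \sum_{i=1}^{n} \lambda_i\, x_i y_i = x_1 y_1 + \sum_{i=2}^{n} \lambda_i\, x_i y_i.$$
The leading term is computed explicitly: $x_1 = \mathbf{1}_U^{T}\mathbf{1}/\sqrt{n} = |U|/\sqrt{n}$ and $y_1 = \sqrt{n}\,\boldsymbol{\pi}^{T}\mathbf{1}_W = \sqrt{n}\,\pi(W)$, so $x_1 y_1 = |U|\,\pi(W)$, which is precisely the principal term being subtracted (this also matches the $\pi(W)^2 n$ appearing in the second factor of the stated bound). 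It then remains to control the tail $\sum_{i=2}^{n} \lambda_i x_i y_i$.

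For the tail I would apply $|\lambda_i| \le \rho$ for $i \ge 2$ (as recorded in the preliminaries under irreducibility and aperiodicity) followed by Cauchy--Schwarz:
$$\Big| \sum_{i=2}^{n} \lambda_i x_i y_i \Big| \le \rho \sum_{i=2}^{n} |x_i|\,|y_i| \le \rho \sqrt{\sum_{i=2}^{n} x_i^2}\;\sqrt{\sum_{i=2}^{n} y_i^2}.$$
The two remaining sums are bounded by the operator norms of $C$ and $C^{-1}$. Indeed the vector $(x_i)_i$ equals $C^{T}\mathbf{1}_U$, so $\sum_{i=1}^{n} x_i^2 = \|C^{T}\mathbf{1}_U\|^2 \le \|C\|^2 |U|$, and subtracting the exact value $x_1^2 = |U|^2/n$ leaves $\sum_{i=2}^{n} x_i^2 \le \|C\|^2|U| - |U|^2/n$. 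Likewise $(y_i)_i = C^{-1}\mathbf{1}_W$ gives $\sum_{i=1}^{n} y_i^2 \le \|C^{-1}\|^2|W|$, and subtracting $y_1^2 = n\,\pi(W)^2$ yields $\sum_{i=2}^{n} y_i^2 \le \|C^{-1}\|^2|W| - n\,\pi(W)^2$. Substituting these into the previous display gives the claimed inequality.

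The step I expect to be the main obstacle is the one with no analogue in the symmetric Expander Mixing Lemma: there the eigenvectors are orthonormal, so $\|C\| = \|C^{-1}\| = 1$ and the coordinates $(x_i)$, $(y_i)$ are exact orthogonal projections. Here the absence of an orthogonal eigenbasis is exactly what forces the condition-number factors $\|C\|$ and $\|C^{-1}\|$ into the estimate, and the care lies in (i) fixing the normalization of the Perron pair $c_1, r_1$ so the principal term emerges cleanly as $|U|\pi(W)$, and (ii) justifying that the full sums $\sum_{i=1}^n x_i^2$ and $\sum_{i=1}^n y_i^2$ admit operator-norm bounds from which the exact $i=1$ contributions $x_1^2$ and $y_1^2$ may simply be subtracted, rather than re-estimated.
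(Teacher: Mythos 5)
Your proposal is correct and follows essentially the same route as the paper: expand $\mathbf{1}_U^T P \mathbf{1}_W$ via the biorthogonal eigendecomposition $P=\sum_i \lambda_i c_i r_i^T$, identify the leading term as $\lvert U\rvert\pi(W)$ (note the theorem statement's $\lvert U\rvert\pi(U)$ is a typo for $\lvert U\rvert\pi(W)$, which is what both you and the paper actually prove), then apply Cauchy--Schwarz and bound the full coordinate sums by $\lVert C\rVert^2\lvert U\rvert$ and $\lVert C^{-1}\rVert^2\lvert W\rvert$ before subtracting the exact $i=1$ contributions. The only cosmetic difference is that you pin down $r_1^T=\sqrt{n}\,\boldsymbol{\pi}^T$ via biorthogonality $r_1^Tc_1=1$, whereas the paper derives it from $\lim_{t\to\infty}P^t$; both are valid.
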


The inequality can be further bounded to obtain a simpler form.

\begin{cor}\label{simpleEML_cor}
    $$\left\lvert \sum_{\substack{i \in U \\ j \in W}} p_{ij} - \lvert U \rvert \pi(W) \right\rvert \leq \rho \sqrt{ \lvert U \rvert \lvert W \rvert}\,\kappa(C),$$
where $\kappa(C) = \lVert C \rVert \lVert C^{-1} \rVert$ is the \textit{condition number} of $C$.
\end{cor}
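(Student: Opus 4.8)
The plan is to derive Corollary~\ref{simpleEML_cor} directly from Theorem~\ref{EML_thm} by coarsening the right-hand side; the left-hand side matches that of Theorem~\ref{EML_thm}, so no further analysis of the quantity $\sum_{i\in U,\, j\in W} p_{ij}$ is needed. It therefore suffices to show that the radicand appearing in Theorem~\ref{EML_thm} is dominated by $\lVert C\rVert^2\lVert C^{-1}\rVert^2\,\lvert U\rvert\,\lvert W\rvert$, after which monotonicity of the square root and multiplication by $\rho$ finish the argument.

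First I would dispose of the two subtracted terms. Since $\lvert U\rvert^2/n\ge 0$ and $\pi(W)^2 n\ge 0$, the two factors
$$\lVert C\rVert^2\lvert U\rvert-\frac{\lvert U\rvert^2}{n}\qquad\text{and}\qquad\lVert C^{-1}\rVert^2\lvert W\rvert-\pi(W)^2 n$$
are at most $\lVert C\rVert^2\lvert U\rvert$ and $\lVert C^{-1}\rVert^2\lvert W\rvert$, respectively. Provided both factors are nonnegative, their product is at most the product of these two upper bounds, so that
$$\rho\sqrt{\Big(\lVert C\rVert^2\lvert U\rvert-\tfrac{\lvert U\rvert^2}{n}\Big)\Big(\lVert C^{-1}\rVert^2\lvert W\rvert-\pi(W)^2 n\Big)}\le\rho\,\lVert C\rVert\,\lVert C^{-1}\rVert\sqrt{\lvert U\rvert\,\lvert W\rvert},$$
and the right-hand side is exactly $\rho\sqrt{\lvert U\rvert\lvert W\rvert}\,\kappa(C)$ by definition of the condition number.

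The one point requiring care is the nonnegativity of these two factors, which is what legitimizes discarding the subtracted terms rather than having to control a genuine difference. For the first factor I would observe that the columns of $C$ are unit eigenvectors, so testing the operator norm against a standard basis vector gives $\lVert C\rVert\ge 1$; combined with $\lvert U\rvert\le n$ this yields $\lVert C\rVert^2\ge 1\ge \lvert U\rvert/n$, hence $\lVert C\rVert^2\lvert U\rvert\ge \lvert U\rvert^2/n$. The second factor is then forced to be nonnegative, since the radicand in Theorem~\ref{EML_thm} is a legitimate real upper bound for an absolute value and so cannot be negative; with the first factor positive, the second must be nonnegative as well. This is the only non-mechanical step, and it is minor—the remainder is routine algebraic simplification.
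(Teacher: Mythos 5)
Your proof is correct and follows essentially the same route as the paper: both arguments simply drop the nonnegative subtracted terms from the radicand in Theorem~\ref{EML_thm} and bound the product by $\lVert C\rVert^2\lVert C^{-1}\rVert^2\,\lvert U\rvert\,\lvert W\rvert = \kappa(C)^2\lvert U\rvert\lvert W\rvert$. The only cosmetic difference is how nonnegativity of the two factors is justified: the paper observes that each factor already appeared in the proof of Theorem~\ref{EML_thm} as an upper bound for a sum of squares (namely $\sum_{i\ge 2}\lvert\chi_U^T x_i\rvert^2$ and $\sum_{i\ge 2}\lvert y_i^T\chi_W\rvert^2$), which is slightly more direct than your indirect inference for the second factor, but your reasoning is sound.
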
 

\begin{remark}
    The formulation above incorporates the conditioning of the eigenbasis. When $P$ is symmetric, $\kappa(C)=1$. In the asymmetric case, $\kappa(C)$ captures how far the eigenbasis deviates from orthogonality.
\end{remark}

\begin{proof}[Proof of Theorem \ref{EML_thm}]
    We will start by constructing the matrix $C$. Let $x_1, \ldots, x_n$ be a set of normalized, linearly independent eigenvectors of $P$ where $x_1$ is an eigenvector for the dominant eigenvalue $\lambda_1$. According to the Perron--Frobenius Theorem, $\lambda_1=1$, and since $P$ is row stochastic we can set $x_1 = \frac{1}{\sqrt{n}}\mathbf{1}$. Let $C$ be the matrix whose columns are $x_1, \ldots, x_n$. Then $C$ is non-singular and $P=C\text{diag}(\lambda_1, \ldots, \lambda_n)C^{-1}$. Let $y_i^T$ be the $i$-th row vector of $C^{-1}$. By the Fundamental Theorem of Markov Chains, the $j$-th component of $P$'s unique stationary distribution $\boldsymbol{\pi}^T$ is $\pi_j = \lim_{t \to \infty} p_{ij}^{(t)}$ where $p_{ij}^{(t)}$ is the $(i, j)$-entry of $P^t$. Since $P$ is aperiodic, according to the Perron--Frobenius Theorem, $\lvert \lambda_i \rvert < 1$ for $2 \leq i \leq n$. Therefore, 
    $$\lim_{t \to \infty} P^t = \lim_{t \to \infty} C \text{diag}(\lambda_1^t, \ldots, \lambda_n^t) C^{-1} = C \text{diag}(1, 0, \ldots, 0) C^{-1}.$$ 
    Hence, 
    \begin{align*}
        \pi_j &= \lim_{t \to \infty}p_{ij}^{(t)} \\
        &= (x_1)_i(y_1^T)_j \\
        &= \frac{(y_1^T)_j}{\sqrt{n}}
    \end{align*}
    and so $y_1^T = \sqrt{n}\boldsymbol{\pi}^T$. 
    
    Next, we will derive an expression for $$\sum_{\substack{i \in U \\ j \in W}} p_{ij}.$$ For a subset $S$ of $V(\Gamma)$ we define the characteristic vector $\chi_S$ component wise by,
    $$ (\chi_S)_i = 
        \begin{cases}
            1, & \text{if } v_i \in S \\[4pt]
            0, & \text{otherwise.}
        \end{cases}$$
    Then, for the sets $U, W \subseteq V(\Gamma)$ we have, 
    \begin{align*}
        \sum_{\substack{i \in U \\ j \in W}} p_{ij} &= \chi_U^T P \chi_W \\ 
        &= \sum_{i=1}^n  \chi_U^T (\lambda_i x_i y_i^T) \chi_W \\
        &= \chi_U^T \mathbf{1}\boldsymbol{\pi}^T \chi_W + \sum_{i=2}^n \lambda_i\, \chi_U^T x_i y_i^T \chi_W \\
        &= \lvert U \rvert \pi(W) + \sum_{i=2}^n \lambda_i\, \chi_U^T x_i y_i^T \chi_W.
    \end{align*}
    Taking absolute values and then applying the Cauchy--Schwarz inequality yields  
    \begin{align*}
        \left\lvert \sum_{\substack{i \in U \\ j \in W}} p_{ij} - \lvert U \rvert \pi(W) \right\rvert &\leq  \sum_{i=2}^n \vert \lambda_i \rvert\,  \lvert \chi_U^T x_i \rvert \lvert y_i^T \chi_W \rvert \\
        &\leq \rho \sqrt{\sum_{i=2}^n \lvert \chi_U^T x_i \rvert ^2 \sum_{i=2}^n \lvert y_i^T \chi_W \rvert ^2}.
    \end{align*}
    Note that $\chi_U^T x_i$ is the $i^{\text{th}}$ component of $\chi_U^T C$. Hence, 
    \begin{align*}
        \sum_{i=1}^n \lvert \chi_U^T x_i \rvert^2 &= \langle \chi_U^T C, \chi_U^T C \rangle \\
        &= \lVert \chi_U^T C \rVert ^2 \\
        &\leq \lVert \chi_U^T \rVert ^2 \lVert C \rVert ^2 \\
        &= \lvert U \rvert \lVert C \rVert ^2. 
    \end{align*}
    Recalling the definition of $x_1$ we have that, 
    $$\lvert \chi_U^T x_1 \rvert^2 = \frac{\lvert U \rvert ^2}{n}.$$ 
    Therefore, 
    $$\sum_{i=2} \lvert \chi_U^T x_i \rvert^2 \leq \lvert U \rvert \lVert C \rVert ^2 - \frac{\lvert U \rvert^2}{n}.$$
    Similarly, 
    $$\sum_{i=2} \lvert y_i^T \chi_W \rvert^2 \leq \lvert W \rvert \lVert C^{-1} \rVert ^2 - \pi(W)^2n.$$
    We can conclude that  
    $$ \left\lvert \sum_{\substack{i \in U \\ j \in W}} p_{ij} - \lvert U \rvert \pi(W) \right\rvert \leq \rho \sqrt{\bigg(\lVert C \rVert^2 \lvert U \rvert - \frac{\lvert U \rvert ^2}{n}\bigg)\bigg(\lVert C^{-1} \rVert^2 \lvert W \rvert - \pi(W)^2 n \bigg)} .$$
\end{proof}

To prove Corollary \ref{simpleEML_cor}, we make a minor adjustment to the previous proof. 
\begin{proof}[Proof of Corollary \ref{simpleEML_cor}]
    From the proof of Theorem \ref{EML_thm} we see that $\lVert C \rVert^2 \lvert U \rvert \geq \frac{\lvert U \rvert^2}{n}$ and $\lVert C^{-1} \rVert \lvert W \rvert \geq \pi(W)^2n$. Therefore, 
    \begin{align*}
        \left\lvert \sum_{\substack{i \in U \\ j \in W}} p_{ij} - \lvert U \rvert \pi(W) \right\rvert &\leq \rho \sqrt{\bigg(\lVert C \rVert^2 \lvert U \rvert - \frac{\lvert U \rvert ^2}{n}\bigg)\bigg(\lVert C^{-1} \rVert^2 \lvert W \rvert - \pi(W)^2 n \bigg)} \\ 
        &\leq \rho\sqrt{(\lVert C \rVert ^2 \lvert U \rvert)(\lVert C^{-1} \rVert^2 \lvert W \rvert)} \\
        &= \rho \sqrt{\lvert U \rvert \lvert W \rvert} \kappa(C).
    \end{align*}
    
\end{proof}

For the case of an undirected $k$-regular graph, the transition probability matrix is $P=\frac{1}{k}A$, where $A$ is the adjacency matrix of the graph. In this setting, $\rho = \mu/k$, where $\mu$ is the second-largest adjacency eigenvalue in absolute value. Moreover, since $P$ is symmetric, it admits an orthonormal eigenbasis, hence we can choose the columns of $C$ to be orthonormal so that $\lVert C \rVert = \lVert C^{-1} \rVert = 1$. The stationary distribution for $P$ is uniform, giving $\pi(W) = \frac{|W|}{n}$. Thus, Theorem~\ref{EML_thm} reduces to the classical Expander Mixing Lemma of Alon and Chung.

\begin{cor}[Alon, Chung \cite{AlonChung1988}]
    Let $\Gamma$ be an undirected $k$-regular graph. Let $k=\theta_1 \geq \cdots \geq \theta_n$ be the adjacency eigenvalues. For any two subsets $U, W \subseteq V(\Gamma)$ let $e(U, W)$ denote the number of edges from $U$ to $W$. Then, 
    $$\left\lvert e(U, W) - \frac{k \lvert U \rvert \lvert W \rvert}{n} \right\rvert \leq \mu \sqrt{\lvert U \rvert \lvert W \rvert \Big(1-\frac{\lvert U \rvert}{n}\Big) \Big(1 - \frac{\lvert W \rvert}{n}\Big)},$$
    where $\mu = \max_{2 \leq i \leq n}\lvert \theta_i \rvert$.  
\end{cor}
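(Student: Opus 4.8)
The plan is to obtain this classical statement as a direct specialization of Theorem~\ref{EML_thm} to the case $P = \frac{1}{k}A$, so the bulk of the work is translating notation and verifying that the theorem's hypotheses hold. First I would record the dictionary between the two settings. Since $P = \frac{1}{k}A$ with $A$ the adjacency matrix of a connected, non-bipartite $k$-regular graph, $P$ is irreducible and aperiodic, so Theorem~\ref{EML_thm} applies. The eigenvalues of $P$ are $\theta_i/k$, so the dominant eigenvalue is $\theta_1/k = 1$ and $\rho = \max_{2\le i\le n}\lvert\theta_i\rvert/k = \mu/k$. Because $A$ is real symmetric, $P$ is symmetric and admits an orthonormal eigenbasis; choosing the columns of $C$ to be orthonormal makes $C$ orthogonal, so $\lVert C\rVert = \lVert C^{-1}\rVert = 1$. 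Finally, $P$ is doubly stochastic, hence the uniform vector is stationary and $\pi(W) = \lvert W\rvert / n$.

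Second, I would substitute these identifications into the two sides of Theorem~\ref{EML_thm}. On the left, $p_{ij} = \frac{1}{k}$ exactly when $(v_i,v_j)\in E(\Gamma)$, so $\sum_{i\in U,\, j\in W} p_{ij} = \frac{1}{k} e(U,W)$, while $\lvert U\rvert \pi(W) = \lvert U\rvert \lvert W\rvert / n$; hence the left-hand side equals $\frac{1}{k}\bigl\lvert e(U,W) - k\lvert U\rvert\lvert W\rvert/n\bigr\rvert$. On the right, using $\lVert C\rVert = \lVert C^{-1}\rVert = 1$ and $\pi(W) = \lvert W\rvert/n$, the two factors under the square root collapse to $\lvert U\rvert - \lvert U\rvert^2/n = \lvert U\rvert\bigl(1 - \lvert U\rvert/n\bigr)$ and $\lvert W\rvert - \lvert W\rvert^2/n = \lvert W\rvert\bigl(1 - \lvert W\rvert/n\bigr)$, together with $\rho = \mu/k$. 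Multiplying the resulting inequality through by $k$ cancels the factor $\frac{1}{k}$ on the left against the $\frac{\mu}{k}$ on the right, yielding exactly the claimed bound.

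The third step is to address the hypotheses I quietly invoked. Theorem~\ref{EML_thm} requires $P$ to be irreducible, aperiodic, and diagonalizable; symmetry of $P$ guarantees diagonalizability with a full orthonormal eigenbasis, while irreducibility and aperiodicity amount to $\Gamma$ being connected and non-bipartite. The main (and essentially only) obstacle is therefore bookkeeping: keeping the factor-of-$k$ rescaling between the probability sum and the edge count consistent on both sides, and recognizing that the disconnected or bipartite cases fall outside the hypotheses of Theorem~\ref{EML_thm}. These can be recovered separately — either by applying the bound componentwise to each connected component, or by invoking the standard Alon--Chung argument, which works directly from the spectral decomposition of the symmetric matrix $A$ and hence requires neither aperiodicity nor connectivity.
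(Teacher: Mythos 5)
Your proposal matches the paper's derivation essentially verbatim: both specialize Theorem~\ref{EML_thm} to $P = \frac{1}{k}A$, using $\rho = \mu/k$, an orthonormal choice of $C$ with $\lVert C\rVert = \lVert C^{-1}\rVert = 1$, the uniform stationary distribution $\pi(W) = \lvert W\rvert/n$, and the rescaling $\sum p_{ij} = \frac{1}{k}e(U,W)$. Your third step is in fact slightly more careful than the paper, which does not remark that the corollary as stated admits disconnected or bipartite graphs lying outside the irreducibility and aperiodicity hypotheses of Theorem~\ref{EML_thm}.
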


\subsection*{Dependence on the choice of eigenbasis}

There is, of course, some freedom in the choice of eigenbasis, and the bound in Theorem~\ref{EML_thm} depends on the norms $\lVert C \rVert$ and $\lVert C^{-1} \rVert$. A natural question, then, is how the bound changes with the choice of eigenbasis. 

When all eigenvalues are simple, any two normalized eigenbases can differ only by permuting the eigenvectors or multiplying each by a complex scalar of modulus one. These operations correspond to right multiplication by a unitary matrix and therefore do not alter $\lVert C \rVert$ or $\lVert C^{-1} \rVert$. Consequently, the bound remains unchanged. 

In the case of repeated eigenvalues, however, $\lVert C \rVert$ and $\lVert C^{-1} \rVert$ are dependent on the choice of eigenbasis. To make the bound invariant under different choices of eigenbasis within an eigenspace, one may adopt the convention of choosing an orthonormal basis for each eigenspace. Under this choice, any alternative basis results in the same $\lVert C \rVert$ and $\lVert C^{-1} \rVert$. This ensures that the bound in Theorem~\ref{EML_thm} is intrinsic to the matrix $P$ and independent of the choice of eigenbasis.

More geometrically, the quantities $\lVert C \rVert$, $\lVert C^{-1} \rVert$ and $\kappa(C) = \lVert C \rVert \lVert C^{-1} \rVert$ may be viewed as spectral parameters of $P$. They capture how far the eigenbasis deviates from orthogonality. The norm $\lVert C \rVert$ measures the stretch of the eigenbasis, that is, how aligned the eigenvectors are. If some eigenvectors are nearly parallel, $\lVert C \rVert$ is large. The norm of the inverse $\lVert C^{-1} \rVert$ reflects the corresponding compression, which increases as the eigenvectors approach linear dependence. Together, their product $\kappa(C)$ quantifies the total geometric distortion of the eigenbasis, providing a natural measure of its departure from orthogonality. 

\section{The Toughness of Directed Graphs}

We extend the definition of toughness to directed graphs using the notion of strongly connected components. A directed graph is \emph{strongly connected} if for every pair of vertices $v_i, v_j \in V(\Gamma)$ there exists a directed path from $v_i$ to $v_j$. In this setting, the toughness of a directed graph is defined as $t(\Gamma) = \min_{S \subseteq V(\Gamma)} \frac{\lvert S \rvert}{c(\Gamma - S)}$, where $S$ are subsets of vertices whose deletion result in a graph $\Gamma - S$ which has two or more strongly connected components. In this section we obtain a spectral lower bound for the toughness of a directed graph, which generalizes Alon's work on $k$-regular graphs \cite{Alon1995}.

\begin{thm} \label{toughness_thm}
    Let $\Gamma$ be a strongly connected directed graph with $n$ vertices and $P$ be its transition probability matrix. Suppose $P$ is irreducible, aperiodic, and diagonalizable. Let $C$ be a matrix of normalized eigenvectors of $P$ which form a basis. Then, 
    $$t(\Gamma) \geq \frac{1}{3}\Bigg(\frac{\pi_{min}}{\pi_{max}\rho \kappa(C)}-\frac{1}{1+\frac{\rho \lVert C \rVert^2 \pi_{min}}{\kappa(C) \pi_{max}}}-1 \Bigg),$$
    where $\pi_{max}$ and $\pi_{min}$ are the maximal and minimal components of $P$'s unique stationary distribution $\boldsymbol{\pi}^T$.
\end{thm}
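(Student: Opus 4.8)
The plan is to follow the structure of Alon's original spectral toughness argument, replacing each appeal to the symmetric Expander Mixing Lemma by its directed counterpart, Corollary~\ref{simpleEML_cor}. Observe first that the asserted inequality is equivalent to the statement that every vertex set $S$ whose removal leaves $c=c(\Gamma-S)\ge 2$ strongly connected components satisfies $|S|\ge \tfrac{c}{3}\,(A-B-1)$, where $A=\frac{\pi_{min}}{\pi_{max}\rho\kappa(C)}$ and $B=\bigl(1+\frac{\rho\lVert C\rVert^2\pi_{min}}{\kappa(C)\pi_{max}}\bigr)^{-1}$; indeed, specializing to an undirected $k$-regular graph (where $\pi_{min}=\pi_{max}$, $\kappa(C)=\lVert C\rVert=1$, and $\rho=\lambda/k$) gives $A=k/\lambda$, $B=k/(k+\lambda)$, and $A-B=\tfrac{k^2}{k\lambda+\lambda^2}$, recovering exactly Alon's bound. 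So I fix a separator $S$ realizing the minimum in the definition of $t(\Gamma)$, write $s=|S|$, and let $V_1,\dots,V_c$ be the strongly connected components of $\Gamma-S$; the goal becomes an upper bound on $c$ in terms of $s$ and the spectral data.

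The first genuinely directed step is to supply, for each component, a ``no transition mass'' relation to feed into the EML. In the undirected case one uses that distinct components have no edges between them, but for strongly connected components this holds only one-sidedly. I would therefore pass to the condensation of $\Gamma-S$, which is a directed acyclic graph on the $c$ components, fix a topological order, and exploit that a component sends no transition mass to the components preceding it (equivalently, a sink component sends out-mass only into $S$). Concretely, for a component $V_\ell$ taken together with the set $W$ of its non-descendants, $\sum_{i\in V_\ell,\,j\in W}p_{ij}=0$, and row-stochasticity $\sum_j p_{ij}=1$ then lets me write the transition mass $V_\ell\to S$ as $|V_\ell|$ minus the internal mass of $V_\ell$. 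Bounding that internal mass above by the diagonal case $U=W=V_\ell$ of Corollary~\ref{simpleEML_cor}, and converting between counting measure and the stationary measure through $\pi_{min}|X|\le \pi(X)\le \pi_{max}|X|$, yields a lower bound on the mass each component pushes across the cut; the ratio $\pi_{min}/\pi_{max}$ that appears here is the origin of the leading factor in $A$.

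Next I would introduce the number of components $c$ by a global budget argument. Applying Corollary~\ref{simpleEML_cor} once to the pair $(V(\Gamma)\setminus S,\,S)$ bounds the total cross-cut mass above by a ``stationary'' part proportional to $\pi(S)$ plus a spectral fluctuation of size $\rho\kappa(C)\sqrt{\,|V(\Gamma)\setminus S|\,s\,}$. The key point is that, after passing to one representative per component so that the deterministic part scales linearly in $c$ while the fluctuation scales like $\sqrt c$, matching this budget against the per-component lower bounds of the previous step forces $c\le (\text{const})\cdot s$, with the constant governed precisely by $\frac{\pi_{min}}{\pi_{max}\rho\kappa(C)}=A$. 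Components too large for this representative device cannot be controlled this way; these are few, and I would bound their contribution separately using the inequality $\sum_i|\chi_U^{T}x_i|^2\le \lVert C\rVert^2|U|$ established inside the proof of Theorem~\ref{EML_thm}, which is where the factor $\lVert C\rVert^2$ inside $B$ enters. Balancing the small-component estimate against the large-component correction, exactly as in Alon's argument, produces both the constant $\tfrac13$ and the subtracted term $B$, and assembles the final bound $t(\Gamma)=s/c\ge \tfrac13(A-B-1)$.

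The main obstacle I anticipate is the simultaneous failure of the three symmetries that trivialize the undirected case. The stationary distribution is no longer uniform, so counting and stationary measures must be interchanged at a controlled cost, which is exactly what $\pi_{min}/\pi_{max}$ records; the eigenbasis is no longer orthogonal, so every EML application is inflated by $\kappa(C)$ (and the diagonal estimates by $\lVert C\rVert$); and the separation between components is only one-directional, so the EML must be applied in the orientation dictated by the condensation. Keeping these distortions from compounding, and in particular verifying that the size-based case split yields constants that collapse to exactly $A$, $B$, and $\tfrac13$ rather than some lossier combination, is where I expect the real work to lie; the combinatorial optimization isolating the largest component, the source of $B$, is the most delicate single step.
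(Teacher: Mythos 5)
Your proposal diverges substantially from the paper's argument, and in its present form it has a genuine gap: the quantitative core is never executed. The paper's proof is much simpler than the machinery you describe. It takes the extremal separator $W$ with $x=c(\Gamma-W)$ components ordered by size, sets $A$ equal to the union of the smallest $\lfloor x/2\rfloor$ components and $B$ equal to the union of the rest, asserts $\sum_{i\in A, j\in B}p_{ij}=0$, and applies Theorem~\ref{EML_thm} \emph{once} to the pair $(A,B)$. From the resulting inequality it extracts $\lvert A\rvert\le\phi$ and an upper bound on $\lvert B\rvert$ in terms of $\lvert A\rvert$, where $\phi=\frac{\rho}{\pi_{min}}\kappa(C)$; the constant $\tfrac13$ comes solely from $\lvert A\rvert\ge\lfloor x/2\rfloor\ge x/3$, and the subtracted term $B$ in the final bound comes from substituting the bound on $\lvert B\rvert$ into $\lvert W\rvert= n-\lvert A\rvert-\lvert B\rvert$ and using monotonicity of the resulting expression in $\lvert A\rvert$ on $[1,\phi]$. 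Your account of where $\tfrac13$ and the term $B$ originate --- a per-component budget, one representative per component, a small/large component case split --- does not correspond to Alon's argument or to the paper's, and you never verify that your proposed budget argument produces these specific constants; you explicitly defer exactly the steps (``forces $c\le(\text{const})\cdot s$'', ``balancing\ldots produces both the constant $\tfrac13$ and the subtracted term'') on which the theorem rests. A plan whose decisive estimates are all deferred is not a proof.

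That said, you put your finger on a real issue that the paper glosses over: for \emph{strongly connected} components of a digraph, distinct components can be joined by edges in one direction, so the assertion $\sum_{i\in A,j\in B}p_{ij}=0$ is not automatic for the paper's choice of $A$ and $B$; it requires an argument (e.g., ordering the components consistently with the condensation so that all cross edges point from $B$ to $A$, or applying the EML in the orientation with zero mass). Your instinct to pass to the condensation and a topological order is the right repair for this one step. But having made that observation, the efficient course is to keep the paper's two-halves decomposition and single application of Theorem~\ref{EML_thm} --- merely choosing the orientation of the pair $(A,B)$ so that the cross-transition mass vanishes --- rather than rebuilding the whole argument around per-component mass accounting, which as written does not reach the stated inequality.
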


\begin{proof}
    Let $t=t(\Gamma)$ denote the toughness of a strongly connected directed graph $\Gamma$. Suppose $W \subseteq V(\Gamma)$ is such that $t = \lvert W \rvert / c(\Gamma - W)$. Let $x = c(\Gamma - W)$ denote the number of components of the disconnected graph. Let $C_1, \ldots, C_x$ be the connected components of $\Gamma-W$, labeled so that $\lvert C_1 \rvert \leq \cdots \leq \lvert C_x \rvert$. 

    Let $A = \bigcup_{i \leq \lfloor \frac{x}{2} \rfloor} C_i$ and $B = \bigcup_{i > \lfloor \frac{x}{2} \rfloor} C_i$. Since there are no edges between these components in the graph $\Gamma$, we have 
    $$ \sum_{\substack{i \in A \\ j \in B}} p_{ij} = 0.$$
    Therefore, by Theorem~\ref{EML_thm}, 
    \begin{align*}
        \lvert A \rvert \pi(B) &\leq \rho \sqrt{\Big(\lVert C \rVert^2 \lvert A \rvert - \frac{\lvert A \rvert^2}{n}\Big)\Big(\lVert C^{-1} \rVert^2\lvert B \rvert - \pi(B)^2n\Big)}, \\
        \lvert A \rvert ^2 \lvert B \rvert ^2 \pi_{min}^2 &\leq \rho^2 \lvert A \rvert \lvert B \rvert \Big(\lVert C^{-1}\rVert^2-\frac{\lvert A \rvert}{n}\Big) \Big(\lVert C \rVert^2 - \lvert B \rvert \pi_{min}^2n\Big), \\
        \lvert A \rvert \lvert B \rvert &\leq \frac{\rho^2}{\pi_{min}^2} \lVert C \rVert ^2 \Big(\lVert C^{-1} \rVert ^2 - \lvert B \rvert \pi_{min}^2 n\Big).
    \end{align*}
    Let $ \phi = \frac{\rho}{\pi_{min}}\kappa(C)$. Then, since $\lvert A \rvert \leq \lvert B \rvert$, we conclude that $\lvert A \rvert \leq \phi$ and 
    $\lvert B \rvert \big( \lvert A \rvert + \rho \lVert C \rVert^2 n \big) \leq \phi^2$. Hence, 
    $$\lvert B \rvert \leq \frac{\phi^2}{\lvert A \rvert + \frac{\rho^2 \lVert C \rVert^2}{\pi_{max}}}.$$
    Note that 
    $$\lvert A \rvert \geq \lfloor \frac{x}{2} \rfloor \geq \frac{x}{3}.$$ 
    Hence, $$t = \frac{\lvert W \rvert}{x} \geq \frac{\lvert W \rvert}{3 \lvert A \rvert}.$$ 
    Furthermore, since $\lvert W \rvert = n - \lvert A \rvert - \lvert B \rvert$ and $\pi_{max} \geq \frac{1}{n}$, we have 
    $$\lvert W \rvert = \pi_{max}^{-1} - \lvert A \rvert - \lvert B \rvert.$$ 
    Therefore, 
    $$t \geq \frac{\pi_{max}^{-1} - \lvert A \rvert - \lvert B \rvert}{3 \lvert A \rvert}.$$
    Substituting in the upper bound on $\lvert B \rvert$ from above we have, 
    $$ t \geq \frac{\pi_{max}^{-1} - \lvert A \rvert - \frac{\phi^2}{\lvert A \rvert + \rho^2 \lVert C \rVert^2 \pi_{max}^{-1}}}{3 \lvert A \rvert}.$$
    As a function of $\lvert A \rvert$ on the interval $[1, \phi]$, the right-hand side of the inequality is continuous and strictly decreasing. Therefore, 
    $$ t \geq \frac{\pi_{max}^{-1} - \phi - \frac{\phi^2}{\phi + \rho^2 \lVert C \rVert ^2 \pi_{max}^{-1}}}{3 \phi}.$$
    Substituting the expression for $\phi$ obtained above and simplifying yields
    $$t \geq \frac{1}{3}\Bigg(\frac{\pi_{min}}{\pi_{max}\rho \kappa(C)}-\frac{1}{1+\frac{\rho \lVert C \rVert^2 \pi_{min}}{\kappa(C) \pi_{max}}}-1 \Bigg),$$
    which completes the proof.
\end{proof}

This result provides a spectral lower bound on the toughness of directed graphs in terms of the asymmetric spectrum of the associated transition probability matrix. Additionally, we obtain Alon's lower bound as a direct consequence of Theorem \ref{toughness_thm}.

\begin{cor}[Alon \cite{Alon1995}]
    Let $\Gamma$ be an undirected connected $k$-regular graph with $n$ vertices and let $\mu$ be its second largest adjacency eigenvalue in absolute value. Then, 
    $$t(\Gamma) \geq \frac{1}{3}\Bigg(\frac{k^2}{k\mu+\mu^2}-1\Bigg).$$
\end{cor}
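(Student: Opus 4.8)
The plan is to specialize every parameter appearing in Theorem~\ref{toughness_thm} to the undirected $k$-regular setting, where each quantity simplifies dramatically, and then to check that the resulting expression collapses to Alon's bound by a routine algebraic manipulation. The same specialization was already carried out in the discussion preceding the Alon--Chung corollary, so the relevant facts are at hand and can simply be invoked.

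First I would record the values of the spectral parameters. For an undirected $k$-regular graph the transition probability matrix is $P=\frac{1}{k}A$, which is symmetric; hence it admits an orthonormal eigenbasis, and choosing the columns of $C$ to be orthonormal gives $\lVert C \rVert = \lVert C^{-1} \rVert = 1$ and $\kappa(C)=1$. The eigenvalues of $P$ are the adjacency eigenvalues divided by $k$, so $\rho = \mu/k$. Finally, the stationary distribution of a regular graph is uniform, $\pi_i = 1/n$ for all $i$, whence $\pi_{min} = \pi_{max} = 1/n$ and in particular $\pi_{min}/\pi_{max} = 1$.

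Next I would substitute these values into the bound of Theorem~\ref{toughness_thm}. The first term $\frac{\pi_{min}}{\pi_{max}\rho\kappa(C)}$ becomes $k/\mu$; inside the second term, the quantity $\frac{\rho \lVert C \rVert^2 \pi_{min}}{\kappa(C)\pi_{max}}$ becomes $\mu/k$, so the second term becomes $\frac{1}{1+\mu/k} = \frac{k}{k+\mu}$. This reduces the bound to
$$t(\Gamma) \geq \frac{1}{3}\left(\frac{k}{\mu} - \frac{k}{k+\mu} - 1\right).$$

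The final step is purely algebraic: placing the first two terms over the common denominator $\mu(k+\mu) = k\mu + \mu^2$, the numerator of $\frac{k}{\mu} - \frac{k}{k+\mu}$ collapses to $k^2$, so that $\frac{k}{\mu} - \frac{k}{k+\mu} = \frac{k^2}{k\mu+\mu^2}$, and hence the bracketed expression equals $\frac{k^2}{k\mu+\mu^2} - 1$, which is exactly Alon's inequality. There is no genuine obstacle here; the work lies entirely in confirming the parameter substitutions, the only points requiring care being the identification $\rho = \mu/k$ and the uniformity of the stationary distribution, both of which are standard for regular graphs and were noted earlier in the paper.
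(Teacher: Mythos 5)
Your proposal is correct and follows the same route as the paper's own proof: specializing $\rho=\mu/k$, $\pi_{min}=\pi_{max}=1/n$, and $\lVert C\rVert=\kappa(C)=1$ in Theorem~\ref{toughness_thm} and simplifying. The only difference is that you spell out the algebraic simplification $\frac{k}{\mu}-\frac{k}{k+\mu}=\frac{k^2}{k\mu+\mu^2}$, which the paper leaves implicit.
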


\begin{proof}
    Let $A$ be the adjacency matrix of $\Gamma$. In the case of an undirected $k$-regular graph, $P=\frac{1}{k}A$ is symmetric. Therefore, $\mu = k\rho$, where $\rho$ is the second largest eigenvalue in absolute value of $P$. Furthermore, in this symmetric case, the stationary distribution of $P$ is uniform, so $\pi_{min}=\pi_{max} = \frac{1}{n}$. Moreover, $P$ has an orthonormal basis of eigenvectors, and we may choose $C$ such that $\lVert C \rVert = \kappa(C) = 1$. Then, in this case,
    $$\Bigg(\frac{\pi_{min}}{\pi_{max}\rho \kappa(C)}-\frac{1}{1+\frac{\rho \lVert C \rVert^2 \pi_{min}}{\kappa(C) \pi_{max}}}-1 \Bigg)=\Bigg(\frac{k^2}{k\mu+\mu^2}-1\Bigg).$$
\end{proof}

\section{Concluding Remarks}

The results presented here demonstrate that many classical spectral techniques for undirected graphs extend naturally to the directed setting even when formulated in terms of asymmetric matrices. 
In particular, the Expander Mixing Lemma for directed graphs (Theorem~\ref{EML_thm}) and the resulting toughness bound (Theorem~\ref{toughness_thm}) demonstrate that classical results can also be extended to include directed graphs without having to associate Hermitian matrices to them. 

\vspace{0.6cm}
\noindent{\textit{Acknowledgments.}} I am grateful to Professor M. Ram Murty and Nic Fellini for their helpful comments and suggestions on earlier drafts of this work.

\end{document}